\newtheorem{remark}{Remark}
\DeclareMathOperator*{\minimize}{minimize}
\DeclareMathOperator*{\maximize}{maximize}
\DeclareMathOperator*{\subt}{subject \ to}
\newtheorem{theorem}{Theorem}[section]
\begin{document}

\begin{frontmatter}

\title{Computationally efficient solution of mixed integer model predictive control problems via machine learning aided Benders Decomposition}

\author{Ilias Mitrai}
\author{Prodromos Daoutidis\fnref{label2}}
\fntext[label2]{Corresponding author}
\ead{daout001@umn.edu}
\address{Department of Chemical Engineering and Materials Science, University of Minnesota, Minneapolis, MN 55455}

\begin{abstract} %% Text of abstract
Mixed integer Model Predictive Control (MPC) problems arise in the operation of systems where discrete and continuous decisions must be taken simultaneously to compensate for disturbances. The efficient solution of mixed integer MPC problems requires the computationally efficient and robust online solution of mixed integer optimization problems, which are generally difficult to solve.  In this paper, we propose a machine learning-based branch and check Generalized Benders Decomposition algorithm for the efficient solution of such problems. We use machine learning to approximate the effect of the complicating variables on the subproblem by approximating the Benders cuts without solving the subproblem, therefore, alleviating the need to solve the subproblem multiple times. The proposed approach is applied to a mixed integer economic MPC case study on the operation of chemical processes. We show that the proposed algorithm always finds feasible solutions to the optimization problem, given that the mixed integer MPC problem is feasible, and leads to a significant reduction in solution time (up to $97 \%$ or $50 \times$) while incurring small error (in the order of $1\%$) compared to the application of standard and accelerated Generalized Benders Decomposition. 
\end{abstract}

\begin{keyword}
%% keywords here, in the form: keyword \sep keyword
Benders decomposition \sep Machine learning \sep Mixed integer MPC \sep Mixed integer optimization
\end{keyword}
\end{frontmatter}

\section{Introduction}
Model Predictive Contol (MPC) is a common optimization-based strategy for controlling a system such that a desired objective function is optimized subject to constraints that describe the behavior of the system \cite{rawlings2017model}. MPC has usually been applied to continuous dynamical systems, with the objective being either the control performance or some economic metric leading to the so-called economic MPC \cite{ellis2014tutorial}. The implementation of MPC relies on the efficient and robust online solution of the underlying optimization problem. Significant advances have been made in the solution of continuous optimization problems, yet the solution of mixed integer optimization problems \cite{conforti2014integer, boukouvala2016global}, i.e., problems that consider both continuous and discrete (integer) variables, remains a challenge.

Discrete variables can arise either due to the hybrid nature of the dynamic system (e.g., piece-wise affine dynamics) or due to the presence of discrete variables related to the operation of the system \cite{bemporad1999control}. Typical examples of the latter include the operation of energy systems where a unit is either on or off \cite{deng2014model}, chemical processes that can operate only at specific operating points \cite{daoutidis2018integrating}, motion planning in the presence of obstacles \cite{cauligi2021coco}, and fuel cell operation \cite{bertsimas2022online}. These problems are formally known as mixed integer optimal control problems \cite{bemporad1999control, mcallister2022advances} and are formulated as Mixed Integer Dynamic Optimization problems (MIDO). 
A common approach to solving such problems is to discretize the differential equations that describe the system and then use mathematical optimization techniques to solve the resulting Mixed Integer Programming (MIP) problem.

Solving such MIP problems online can be a daunting computational task and thus a major limitation for the implementation of mixed integer MPC. Branch and Bound is a standard solution strategy, where branching is performed either on the integer variables or on the spatial domain of the feasible region for nonconvex problems \cite{bemporad1999efficient, tawarmalani2005polyhedral, bemporad2018numerically, hespanhol2019structure, axehill2006mixed, buchheim2016feasible, naik2017embedded, marcucci2020warm}. This approach can guarantee global optimality, yet it is generally slow for online applications, especially when the underlying problem is a Mixed Integer Nonlinear Programming (MINLP) problem. Multiparametric programming can be used to efficiently determine the optimal solution of a MIP problem \cite{kouramas2011explicit, borrelli2005dynamic, oberdieck2015explicit,axehill2014parametric}. However, computing the critical regions, i.e., the optimal solution, for every value of the parameters of the MIP problem is computationally expensive for nonlinear systems with many states. Another solution approach is to exploit the underlying structure of the problem and use decomposition-based solution algorithms, such as the combinatorial integral approximation (CIA) \cite{sager2012integer}, dual dynamic programming \cite{kumar2021dual}, Alternating Direction Method of Multipliers \cite{takapoui2020simple}, and Generalized Benders Decomposition (GBD) \cite{mohideen1997towards, mitrai2022adaptive, menta2020learning, warrington2019learning}.

GBD is based on the observation that if a subset of the variables of the optimization problem, called complicating variables, are fixed, then the problem can be solved faster. GBD decomposes the MIP problem into a master problem which considers the integer (and possibly some continuous) variables and a subproblem which is a continuous optimization problem. This decomposition is based on the underlying structure of the problem, which in general can be inferred using structure detection methods from network science \cite{mitrai2022stochastic, mitrai2021efficient}. The master problem and subproblem are solved iteratively and coordinated via cuts which inform the former about the effect of the complicating variables on the latter. 

Although decomposition-based methods generally lead to a reduction in solution time compared to monolithic ones, their online implementation is nontrivial since the master problem and the subproblem must be solved multiple times online. This approach can be especially challenging when the subproblem is nonlinear since, in this case, an initial guess for the solution is required. However, identifying a good initial guess is difficult since the parameters of the subproblem, i.e., the complicating variables, can change significantly between two iterations of GBD. Also, the convergence of GBD can be slow and is guaranteed only when the value function of the subproblem is convex \cite{geoffrion1972generalized}. 

Recently, machine learning (ML) has been used to improve the computational performance of optimization algorithms either by focusing on tuning the optimization algorithm or accelerating certain algorithmic steps \cite{bengio2021machine}. In the context of mixed integer MPC, ML has been used to approximate the controller itself \cite{karg2018deep}, the underlying dynamic model \cite{hu2023online}, the values of the discrete variables \cite{masti2020learning, zhu2020fast, cauligi2022prism, russo2023learning}, and the active constraints \cite{cauligi2021coco, bertsimas2022online}. The prediction of the discrete variables transforms the MIP problem into a continuous optimization problem that can, in principle, be solved faster. Alternatively, ML can be used to accelerate the solver itself either by warm-starting or improving algorithmic steps such as branching and cut selection for GBD \cite{mitrai2023focapo}, or can guide the selection of the best solution strategy \cite{kruber2017learning,mitrai2023graph, chakrabarty2021learning}.

In this work, we aim to reduce the computational time related to the solution of the master problem and the subproblem, and thus enable the efficient online application of GBD in mixed integer MPC problems. Specifically, we propose a branch and check algorithm where the master problem is solved only once and Benders cuts are added whenever an integer feasible solution is found during branch and bound. We use ML to approximate the information that is required for the construction of cuts added to the master problem, therefore, alleviating the need for iterative solution of the subproblem. The proposed algorithm can identify a feasible solution to the original problem, if it exists, and can be applied to a broad class of optimization problems that frequently arise in mixed integer MPC applications, such as Mixed Integer Linear, Quadratic, and Nonlinear Programming problems. 

We illustrate the application and advantages of the proposed approach through a case study on the scheduling and dynamic optimization of a chemical process. Specifically, we consider a continuously stirred isothermal reactor that can manufacture multiple products and a mixed integer economic MPC problem is repeatedly solved online to compensate for updated process information, such as a change in product demand and the inlet conditions of the reactor. The results obtained for different numbers of products show that the proposed approach leads to a significant reduction in solution time (up to $97 \%$), while incurring small error. 

The rest of the document is organized as follows: In Section~\ref{bnc GBD}, we present the branch and check GBD algorithm, in Section~\ref{problem statement} we present the mixed integer MPC problem considered, and in Section~\ref{case studies} we compare the proposed approach with standard GBD approaches in numerical case studies.

\section{Machine learning based branch and Benders cut algorithm} \label{bnc GBD}
\subsection{Generalized Benders Decomposition}
We consider the following optimization problem:
\begin{equation}\label{full model}
    \begin{aligned} 
        P(p) := \minimize_{x,y,z} \ \ & f_1 (z,y;p) + f_2 (y,x;p) \\
        \subt \ \ & g_1 (z,y;p) \leq 0 \\
        & g_2 (y,x;p) \leq 0 \\
        & x \in \mathbb{R}^{n_x}, y \in \mathbb{R}^{n_{y}^c} \times \mathbb{Z}^{n_{y}^{d}}, z \in \mathbb{R}^{n_z},
    \end{aligned}
\end{equation}
where $p$ are the parameters of the problem. Depending on the number of continuous variables $n_{x}+n_{z}+n_{y}^c$ and discrete variables $n_{y}^d$, and the number as well as the functional form of the objective and constraints, the solution of the above problem can be computationally challenging. 

GBD is based on the observation that if a subset of the variables is fixed, then the problem is easier to solve. Given the problem in Eq.~\ref{full model}, if the $x$ and $y$ variables are fixed, the following problem called the subproblem is obtained:
\begin{equation}\label{sub problem}
    \begin{aligned} 
        \mathcal{S}(y,p) := \minimize_{\bar{y},x} \ \ & f_2 (\bar{y},x;p) \\
        \subt \ \ & g_2 (\bar{y},x;p) \leq 0 \\
        & \bar{y} = y \ \ : \ \lambda \\
        & \bar{y} \in \mathbb{R}^{n_{y}^c} \times \mathbb{R}^{n_{y}^{d}}, x \in \mathbb{R}^{n_x},
    \end{aligned}
\end{equation}
where $\lambda$ are the Lagrange multipliers for the equality constraints $\bar{y}=y$. From this formulation, it follows that the solution of the subproblem and the optimal value of the dual variables $\lambda$ depend on the values of the complicating variables $y$ and parameters $p$. Given this subproblem, the original problem $P(p)$ (Eq.~\ref{full model}) can be written as:
\begin{equation}\label{full model with val fun}
    \begin{aligned} 
        P(p) := \minimize_{y,z} \ \ & f_1 (z,y;p) + \mathcal{S}(y,p) \\
        \subt \ \ & g_1 (z,y;p) \leq 0 \\
        & z \in \mathbb{R}^{n_z}, y \in \mathbb{R}^{n_{y}^c} \times \mathbb{Z}^{n_{y}^{d}}.
    \end{aligned}
\end{equation}
This problem can not be solved directly since the value function of the subproblem $\mathcal{S}$ is not known explicitly. An approach to overcome this is to approximate the value function via inequalities, called Benders cuts \cite{geoffrion1970elements, geoffrion1970elementspart2, geoffrion1972generalized}, which are equal to
\begin{equation}
    \mathcal{S}(y,p) \geq \mathcal{S}(\bar{y}^l,p) - \lambda(\bar{y}^l;p) (y - \bar{y}^l) \ \forall l \in \mathcal{L},
\end{equation}
where $\mathcal{L}$ denotes the set of points, i.e., values of $\bar{y}$, used to approximate the value function. Given this approximation, the problem in Eq.~\ref{full model with val fun} can be written as:
\begin{equation}\label{full model with val fun - master prob}
    \begin{aligned} 
        M(p, \mathcal{L}) := \minimize_{y,z} \ \ & f_1 (z,y;p) + \eta \\
        \subt \ \ & g_1 (z,y;p) \leq 0 \\
        & \eta \geq \mathcal{S}(\bar{y}^l,p) - \lambda(\bar{y}^l;p) (y - \bar{y}^l) \ \forall l \in \mathcal{L}\\
        & z \in \mathbb{R}^{n_z}, y \in \mathbb{R}^{n_{y}^c} \times \mathbb{Z}^{n_{y}^{d}},
    \end{aligned}
\end{equation}
assuming that the subproblem is feasible for all values of the complicating variables. This problem is known as the master problem and its solution depends on the values of the parameters $p$ and the number of Benders cuts used to approximate the value function $\mathcal{S}$. We note that if the subproblem is infeasible for a given value of the complicating variables, then Benders feasibility cuts can be generated and added to the master problem \cite{geoffrion1972generalized}. 

Given the number and type of complicating variables, a large number of Benders cuts can be potentially used to approximate the value function. However, adding a large number of cuts directly is computationally intractable. The GBD algorithm constructs this approximation iteratively. Specifically, in the first iteration, the master problem is solved without any cuts. The solution of this problem provides a lower bound and values for the complicating variables. Next, the subproblem is solved for the given values of the complicating variables and an upper bound as well as a Benders cut is obtained and added to the master problem. This procedure continues until the bounds converge. This iterative procedure can limit the computational performance of the algorithm in cases where the master problem is a complex MILP and the subproblem is a Nonlinear Programming problem (NLP). One approach to overcome these limitations is to follow a branch and check or branch and Benders-cut solution strategy \cite{thorsteinsson2001branch, laporte1993integer}. 

\subsection{Branch and check solution approach}
In this approach, the master problem is solved once using standard branch and bound/cut algorithms. Specifically, branch and check starts by solving the continuous relaxation of the master problem which generates a set of open nodes $R$, and standard node selection techniques are used to select an open node $\rho$ and solve the continuous relaxation at this node. If the solution at node $\rho$ is integral, then the subproblem (Eq.~\ref{sub problem}) is solved for $y=y^{\rho}$ and the Benders cut
\begin{equation} \label{bc benders cut}
    \eta \geq \mathcal{S}(y^{\rho},p) - \lambda(y^{\rho};p) (y - y^{\rho})
\end{equation}
is added to all the open nodes $R$ in the branch and bound tree; the solution continues by adapting the set of open nodes and selecting a new node. 

\subsection{Machine learning based branch and check Generalized
Benders Decomposition}

In the branch and check solution approach, although the master problem is solved only once, multiple subproblems must be solved to generate the cuts. This is computationally expensive for cases where the subproblem is either a large-scale problem or a nonlinear optimization problem. In the latter case, the solution of NLP problems requires a starting point. However, making a good initial guess is challenging since the parameters of the subproblem, i.e., the values of the complicating variables, can change significantly between two iterations of the algorithm. 
The solution of the subproblem provides information that is necessary for the construction of Benders cuts, i.e., the value function of the subproblem $\mathcal{S}$ and the Lagrange multipliers $\lambda$ for a given value of the complicating variables. 
 
To overcome the limitations mentioned above, we will use ML-based surrogate models $\hat{\mathcal{S}}, \hat{\lambda}$ to approximate the value function $\mathcal{S}$ and Lagrange multipliers $\lambda$ of the subproblem. These models can be learned offline by solving the subproblems for multiple values of the complicating variables $\{y_i\}_{i=1}^{N_{data}}$ and obtaining the value function $\{\mathcal{S}_i (y_i) \}_{i=1}^{N_{data}}$ and Lagrange multipliers $\{\lambda_i (y_i) \}_{i=1}^{N_{data}}$. Through this procedure, we obtain two datasets: the first one $\{y_i, \mathcal{S}_{i} (y_i)\}_{i=1}^{N_{data}}$ is used to learn a surrogate $\hat{\mathcal{S}}(y)$ for the value function and the second one $\{y_i, \lambda_i (y_i) \} _{i=1}^{N_{data}}$ is used to learn a surrogate $\hat{\lambda}(y)$ for the Lagrange multipliers. Given these surrogate models, once an integer feasible solution is found during branch and check at node $\rho$ the following cut is added
\begin{equation}
    \eta \geq \hat{\mathcal{S}}(y^\rho) - \hat{\lambda}(y^\rho) (y - y^\rho)
\end{equation}
to all the open nodes. The proposed algorithm is presented in Algorithm~\ref{alg: ml based bnc benders algorithm}. 

\begin{algorithm}[t]
\caption{ML-based Branch and Check GBD}
\label{alg: ml based bnc benders algorithm}
\KwData{Optimization problem, surrogate models $\hat{\mathcal{S}}$ and $\hat{\lambda}$}
\KwResult{Solution of the optimization problem}
Start Branch and Bound algorithm by solving continuous relaxation of the master problem\;
Obtain open nodes $R$\;
\While{$R \neq \emptyset$}{
Select a node $\rho$ from $R$ using node selection techniques\;
Solve continuous relaxation at $\rho$ and obtain $y^{\rho}$\;
\eIf{$y^{\rho}$ is integer}{
Approximate the value function $\hat{\mathcal{S}}(y^{\rho})$\;
Approximate Lagrange multipliers $\hat{\lambda}(y^{\rho})$\;
Add Benders cut to all open nodes in $R$\\ 
\begin{equation*}
    \eta \geq \hat{S}(y^{\rho}) - \hat{\lambda}(y^{\rho}) (y - y^{\rho})
\end{equation*}
Add MIP-based cuts\;
Update existing nodes in $R$}{
Partition domain of $y$ variables\;
Add MIP-based cuts\;
Update existing nodes in $R$}
}
\end{algorithm} 

\begin{remark}
    \normalfont The proposed algorithm can be used for the solution of a wide class of optimization problems that arise in mixed integer MPC applications, such as MILPs ($f,g$ are affine), MIQPs ($f$ is quadratic positive definite and $g$ affine) as well as MINLPs where either the continuous relaxation is convex or the value function of the subproblem is convex.
\end{remark}

\section{Mixed Integer Model Predictive Control for Integrated Scheduling and Optimization of a Chemical Process} \label{problem statement}

We will consider a continuous chemical production system such that a number ($N_p$) of different products can be manufactured over a production horizon $H$. We consider the case where the system is originally following a nominal schedule, and at some time point $T_0$ a disturbance affects the system. The disturbance can either be a change in the demand of the products or a change in the conditions of the manufacturing system. Both types of disturbances affect the available production time, since a change in demand affects the production time of the products and a change in the inlet conditions of the system increases the transition times which subsequently reduces the production time given the fixed time horizon. Once the disturbance affects the system, a mixed integer MPC problem is solved to determine the production sequence, production time, and transition times as presented in Fig.~\ref{fig: rescheduling shceme}.
\begin{figure*}[h!]
    \centering
    \includegraphics[scale=0.6]{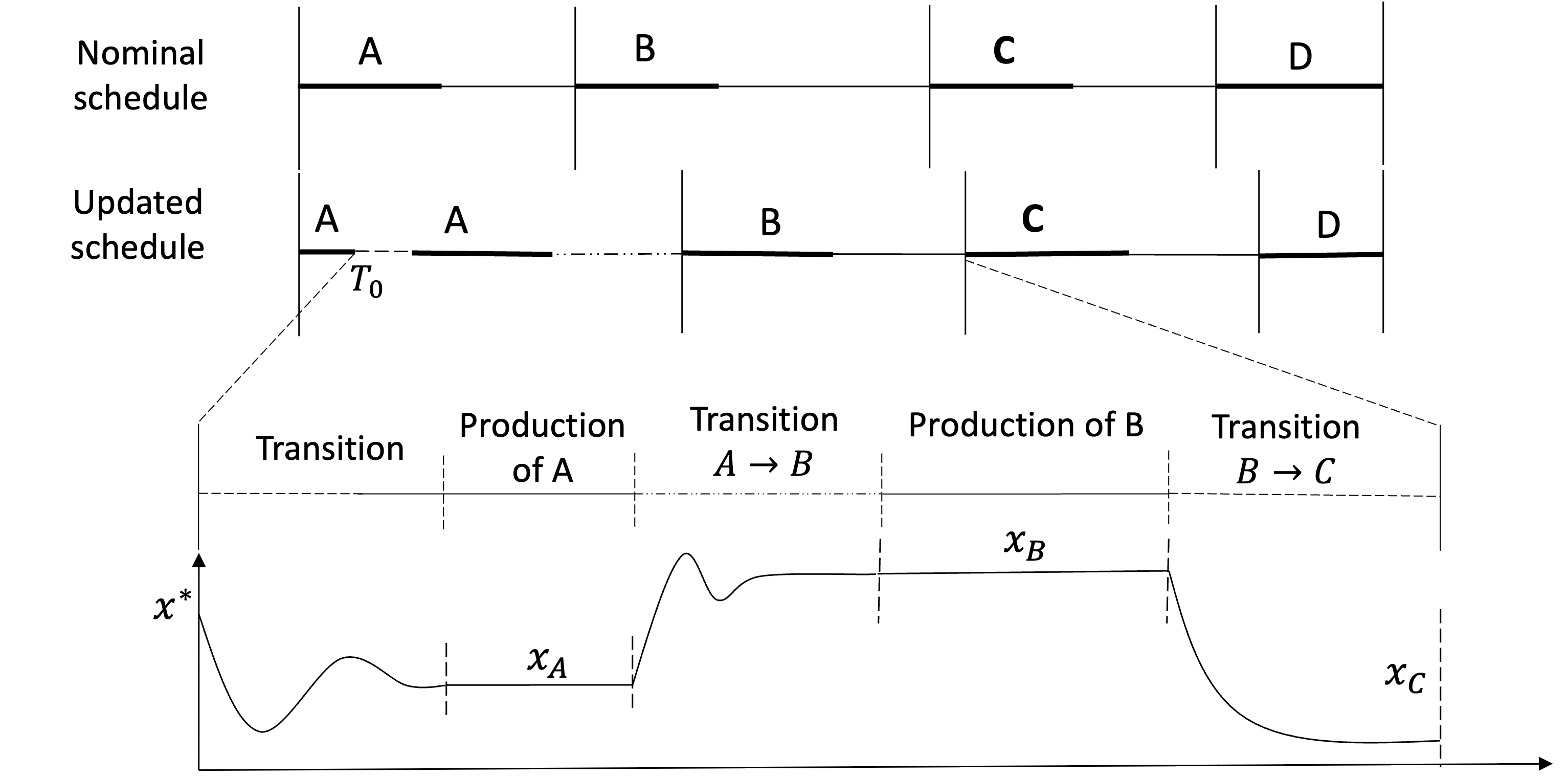}
    \caption{Schematic of mixed integer MPC  problem}
    \label{fig: rescheduling shceme}
\end{figure*}

\subsection{Scheduling model}

We define $\mathcal{I}_p = \{1,.., N_p\}$ to be the set of products that must be manufactured and the time horizon is discretized into $N_s$ slots ($\mathcal{I}_s = \{1,.., N_s\}$). Every slot, except the first one, has two regimes (see Fig.~\ref{fig: rescheduling shceme}); a production regime where a product is manufactured and a transition regime where a transition occurs between the operating points of two products. The first slot has three regimes, a transition regime from some intermediate state to the operating point of the product manufactured in the first slot, a production regime, and a second transition regime from the operating point of the product manufactured in the first slot to the operating point of the product manufactured in the second slot. 

We define a binary variable $W_{ik}$ which is equal to one if product $i$ is manufactured in slot $k$ and zero otherwise, and a binary variable $Z_{ijk}$ which is equal to one if a transition occurs from product $i$ to $j$ in slot $k$ and zero otherwise. We also define a binary variable $\hat{Z}_i$ which is equal to one if a transition occurs from an intermediate state to the operating point of the product manufactured in the first slot. We assume that every product is manufactured only once, i.e., $N_p=N_s$, and that in every slot only one product is manufactured. These constraints are described mathematically as follows:
\begin{equation} \label{logic cons}
    \begin{aligned}
        \sum_{k \in \mathcal{I}_s} W_{ik} & = 1 \ \  \forall \ i \in \mathcal{I}_p \\
        \sum_{i \in \mathcal{I}_p} W_{ik}  & = 1 \ \  \forall \ k \in \mathcal{I}_s\\
        Z_{ijk} & \geq W_{ik} + W_{j,k+1} -1 \ \forall i \in \mathcal{I}_p,j \in \mathcal{I}_p ,k \in \mathcal{I}_s \setminus \{N_s\}\\
        \hat{Z}_i & = W_{i1} \ \ \forall i \in \mathcal{I}_p.
    \end{aligned}
\end{equation}
 $T_k^s, T_k^e$ are the starting and ending time of slot k,  $\Theta_{ik}$ is the production time of product $i$ in slot $k$, $\theta_{ijk}$ is the transition time from product $i$ to $j$ in slot $k$, and $\hat{\theta}_i$ is the transition time from an intermediate state to the steady state of product $i$. The timing constraints are:
\begin{equation}\label{timing cons}
    \begin{aligned}
        T_1^s & = 0\\
        T_k^e & = T_{k}^s + \sum_{i \in \mathcal{I}_p} \Theta_{ik} + \theta_k^t \ \ \forall k \in \mathcal{I}_s\\
        T_{N_s}^e & = H-T_0\\
        T^{s}_{k+1} & = T^{e}_k \ \ \forall k \in \mathcal{I}_s \setminus \{N_s\} \\
        \Theta_{ik} & \leq W_{ik} H \ \ \forall i \in \mathcal{I}_p, k \in \mathcal{I}_s
    \end{aligned}
\end{equation}
and the transition times are equal to
\begin{equation}\label{timing cons 2}
    \begin{aligned}
        \theta_k^t & = \sum_{i \in \mathcal{I}_p} \sum_{j \in \mathcal{I}_p} \theta_{ijk} Z_{ijk} \ \ \forall k \in \mathcal{I}_s \setminus \{1\} \\
        \theta_1^t & = \sum_{i \in \mathcal{I}_p} \sum_{j \in \mathcal{I}_p} \theta_{ij1} Z_{ij1} + \sum_{i \in \mathcal{I}_p} \hat{\theta}_i \hat{Z}_i
    \end{aligned}
\end{equation}
\begin{equation}\label{timing cons - tr time}
    \begin{aligned}
        \theta_{ijk} & \geq \theta^{min}_{ij} \ \ \forall i \in \mathcal{I}_p, j \in \mathcal{I}_p, k \in \mathcal{I}_s\\
        \hat{\theta}_{i} & \geq \hat{\theta}^{min}_{i} \ \ \forall i \in \mathcal{I}_p,
    \end{aligned}
\end{equation}
where $\theta_{ij}^{min}$ is the minimum transition time from product $i$ to $j$, and $\hat{\theta}_{i}^{min}$ is the minimum transition time from the intermediate state to the steady state of product $i$. The production rate of product $i$ is $r_i$, the production amount of product $i$ in slot $k$ is $q_{ik}$, and the inventory of product $i$ in slot $k$ is $I_{ik}$. The production constraints are
\begin{equation}\label{inv cons}
    \begin{aligned}
        q_{ik} & = r_i \Theta_{ik} \ \ \forall i \in \mathcal{I}_p, k\in \mathcal{I}_s \\
        I_{i,1} & = I^0_i + q_{i1} - S_{i,1} \ \ \forall i \in \mathcal{I}_p\\
        I_{i,k+1} & = I_{ik} + q_{ik} - S_{ik}\ \ \forall i \in \mathcal{I}_p, k \in \mathcal{I}_s \setminus \{N_s\},
    \end{aligned}
\end{equation}
where $I^0_i$ is the initial inventory of product $i$ and $S_{ik}$ is the amount of product $i$ sold in slot $k$. We assume that the due date for the demand is the end of the time horizon, i.e., $S_{iN_S} \geq d_i \ \ \forall i \in \mathcal{I}_p$. 

\subsection{Dynamic model}
The dynamic behavior of the system is described by a system of ordinary differential equations
\begin{equation}
    \frac{dx(t)}{dt} = F(x(t),u(t)),
\end{equation}
where $x \in \mathbb{R}^{n_x}$ are the states of the dynamic model, $u \in \mathbb{R}^{n_u}$ are the manipulated variables, and $F: \mathbb{R}^{n_z} \times \mathbb{R}^{n_u} \mapsto \mathbb{R}^{n_x}$ are vector functions. The system can manufacture multiple products, which correspond to different steady state operating points $\{x^{ss}_i,u^{ss}_i \}_{i=1}^{N_p}$, by adjusting the manipulated variables. 

We consider all the transitions between the products simultaneously and discretize the differential equations using orthogonal collocation on finite elements using $N_{fe}$ $(\mathcal{I}_{f} = \{1,..., N_{fe}\})$ finite elements and $N_c$ $(\mathcal{I}_c = \{1,..., N_c\})$ collocation points. We define $x_{ijkfc}$ and $u_{ijkfc}$ as the values of the state and manipulated variable for a transition from product $i$ to $j$ in slot $k$ at finite element $f$ and collocation point $c$. We also define variables $\hat{x}_{ifc}$, $\hat{u}_{ifc}$ as the values of the state and manipulated variable for a transition from state $x_0$ to product $i$ to at finite element $f$ and collocation point $c$ at the first slot. The discretized differential equations for the transitions between the products are
\begin{equation} \label{disc ode cons}
 \begin{aligned}
 &  x_{ijfck} = x0_{ijfk} + h^{fe}_{ijk} \sum_{m=1}^{N_{cp}} \Omega_{mc} \dot{x}_{ijfmk}^n \ \ \ \forall (i,j,k,f,c) \in \mathcal{I}_{dyn}^{1}\\
 &  h^{fe}_{ijk} = \frac{\theta_{ijk}}{N_{fe}} \ \ \forall i \in \mathcal{I}_p, j \in \mathcal{I}_p, k\in \mathcal{I}_s \\
 &  x0_{ijfk}= x0_{ij,f-1,k} + h^{fe}_{ijk} \sum_{m=1}^{N_{cp}} \Omega_{mc} \dot{x}_{ij,f-1,mk} \ \ \forall (i,j,k,f,c) \in \mathcal{I}_{dyn}^{2} \\
 & \dot{x}_{ijfck} = F(x_{ijfck},u_{ijfckz}) \ \ \forall (i,j,k,f,c) \in \mathcal{I}_{dyn}\\
 & t^d_{ijfck} = h_{ijk}^{fe} (f-1+\gamma_c)\ \ \forall (i,j,k,f,c) \in \mathcal{I}_{dyn}\\
  & x0_{ij1k} = x^{ss}_{i} \ \ \forall i \in \mathcal{I}_p, j \in \mathcal{I}_p, k\in \mathcal{I}_s\\
  & x_{ijkN_{fe}N_{cp}} = x^{ss}_{j} \ \ \forall i \in \mathcal{I}_p, j \in \mathcal{I}_p, k\in \mathcal{I}_s\\
  & u_{ijk11} = u^{ss}_{i} \ \ \forall i \in \mathcal{I}_p, j \in \mathcal{I}_p, k\in \mathcal{I}_s\\
  & u_{ijkN_{fe}N_{cp}} = u^{ss}_{j} \ \ \forall i \in \mathcal{I}_p, j \in \mathcal{I}_p, k\in \mathcal{I}_s,
 \end{aligned}
 \end{equation}
where $\mathcal{I}_{dyn}^{1} = \{ (i,j,k,f,c) \ \forall i \in \mathcal{I}_p, j \in \mathcal{I}_p, f\in \mathcal{I}_f, c\in \mathcal{I}_c, k\in \mathcal{I}_s \}$, $\mathcal{I}_{dyn}^{2} = \{ (i,j,k,f,c) \ \forall i \in \mathcal{I}_p, j \in \mathcal{I}_p, f \in \mathcal{I}_f \setminus \{1,2\}, c\in \mathcal{I}_c, k\in \mathcal{I}_s \}$, $t_{ijfck}$ is the time at finite element $f$ at collocation point $c$ for a transition from product $i$ to $j$ in slot $k$, $\Omega$ is the collocation matrix, and $\gamma$ are the Radau roots. Similarly, the discretized equations for the transition from the intermediate state $x_0$ to the steady state operating point of the products are
\begin{equation} \label{disc ode cons int trans}
 \begin{aligned}
 &  \hat{x}_{ifc} = \hat{x}0_{if} + \hat{h}^{fe}_{i} \sum_{m=1}^{N_{cp}} \Omega_{mc} \dot{\hat{x}}_{ifm} \ \ \ \forall (i,f,c) \in \hat{\mathcal{I}}_{dyn}^{1}\\
 &  \hat{h}^{fe}_{i} = \frac{\hat{\theta}_{i}}{N_{fe}} \ \ \forall i \in \mathcal{I}_p\\
 &  \hat{x}0_{if} = \hat{x}0_{i,f-1} + \hat{h}^{fe}_{i} \sum_{m=1}^{N_{cp}} \Omega_{mc} \dot{\hat{x}}_{i,f-1,m} \ \ \forall (i,f,c) \in \hat{\mathcal{I}}_{dyn}^{2} \\
 & \dot{\hat{x}}_{ifc} = F(\hat{x}_{ifc},\hat{u}_{ifc}) \ \ \forall (i,f,c) \in \hat{\mathcal{I}}_{dyn}^{1}\\
 & \hat{t}^d_{ifc} = \hat{h}_{i}^{fe} (f-1+\gamma_c)\ \ \forall (i,f,c) \in \hat{\mathcal{I}}_{dyn}^{1}\\
  & \hat{x}0_{i1} = x_0 \ \ \forall i \in \mathcal{I}_p\\
  & \hat{x}_{iN_{fe}N_{cp}} = x^{ss}_{i} \ \ \forall i \in \mathcal{I}_p\\
  & u_{iN_{fe}N_{cp}} = u^{ss}_{i} \ \ \forall i \in \mathcal{I}_p, 
 \end{aligned}
 \end{equation}
where $\hat{\mathcal{I}}_{dyn}^{1} = \{ (i,f,c) \ \forall i \in \mathcal{I}_p, f\in \mathcal{I}_f, c\in \mathcal{I}_c \}$, $\hat{\mathcal{I}}_{dyn}^{2} = \{ (i,f,c) \ \forall i \in \mathcal{I}_p, f \in \mathcal{I}_f \setminus \{1,2\}, c\in \mathcal{I}_c, \}$, $\hat{t}_{ifc}$ is the time at finite element $f$ at collocation point $c$ for a transition from $x_0$ to the operating point of product $i$.

\subsection{Integrated problem}
The integrated problem seeks to identify the optimal production sequence, production times, transition times, and dynamic transitions between products such that the profit is maximized while the demand and other operational constraints are satisfied. The objective function is the profit and has two terms: the first is the sales minus the operating and inventory cost and the second is the cost related to the dynamic transitions between the products. The three terms are:
\begin{equation}
    \begin{aligned}
        \Phi_1 &= \sum_{ i \in \mathcal{I}_p } \sum_{k \in \mathcal{I}_s} \bigg( P_{ik} S_{ik}- C_{ik}^{op} q_{ik} \bigg)  - C^{inv} I_{ik} - \sum_{i\in \mathcal{I}_p}  \sum_{j\in \mathcal{I}_p}   \sum_{k\in \mathcal{I}_s}  C_{ij}^{tr} Z_{ijk}\\
       \Phi_2 & = \sum_{i\in \mathcal{I}_p}  \sum_{j\in \mathcal{I}_p}   \sum_{k\in \mathcal{I}_s} Z_{ijk} f_{ijk}^{dyn}\ - \sum_{i\in \mathcal{I}_p} \hat{Z}_{i} \hat{f}_{i}^{dyn}\\
        f_{ijk}^{dyn} & = \alpha_u \bigg( \sum_{l \in \mathcal{I}_f} \sum_{c \in \mathcal{I}_c} N_{fe}^{-1}t_{ijfck} \Omega_{c, N_c} (u_{ijfck}-u_{j}^{ss})^2\bigg)\\
        \hat{f}_{i}^{dyn} & = \alpha_u \bigg( \sum_{l \in \mathcal{I}_f} \sum_{c \in \mathcal{I}_c} N_{fe}^{-1} \hat{t}_{ijfck} \Omega_{c, N_c} (\hat{u}_{ijfck}-u_{j}^{ss})^2\bigg)
    \end{aligned}
\end{equation}
where $P_{ik}$,  $C_{ik}^{op}$ are the price and operating cost of product $i$ in slot $k$ respectively, $C^{inv}$ is the inventory cost, $C_{ij}^{tr}$ is the fixed transition cost from product $i$ to $j$, and $\alpha_u$ is a weight coefficient. Overall, the resulting problem is a mixed integer economic MPC problem and the optimization problem is
\begin{equation} \label{intgr prob}
    \begin{aligned}
      P(p):= \maximize \ \ & \Phi_1 - \Phi_2  \\
        \subt \ \ & \text{Eq.}~\ref{logic cons}, \ref{timing cons}, \ref{timing cons 2}, \ref{timing cons - tr time}, \ref{inv cons}, \ref{disc ode cons}, \ref{disc ode cons int trans},
    \end{aligned}
\end{equation}
where $p$ are the parameters of the optimization problem. 

\subsection{Decomposition-based solution approach}
Given the mixed integer MPC problem, we observe that if the scheduling variables and transition times are fixed, then the problem is decomposed into a number of independent subproblems, where one subproblem considers the transition from the intermediate state to the steady state of the product manufactures in the first slot and the other subproblem considers transitions between products. The subproblem for a transition between two products is 
\begin{equation}
    \begin{aligned}
    \mathcal{S}_{ijk} (\theta_{ijk}): =\minimize_{x_{ijkfc},u_{ijkfc}, \tilde{\theta}_{ijk}}  \ \ & \ \ f_{dyn}^{ijk} (x_{ijkfc},u_{ijkfc}, \tilde{\theta}_{ijk})  \\
    \subt \ \ & \ \ g_{dyn}(\tilde{\theta}_{ijk}, x_{ijkfc},u_{ijkfc}) \leq 0 \\
    & \ \ \tilde{\theta}_{ijk} = \theta_{ijk} \ \ : \lambda_{ijk},
    \end{aligned}
\end{equation}
where $\mathcal{S}_{ijk} (\theta_{ijk})$ is the value function and $\lambda_{ijk}$ is the Lagrange multiplier of the equality constraint $\tilde{\theta}_{ijk} = \theta_{ijk}$. The value of the Lagrange multiplier at the optimal solution of the subproblem depends on the transition time, i.e., $\lambda_{ijk}(\theta_{ijk})$. The subproblem for the transition from the intermediate state to the operating point of the product $i$ is
\begin{equation}
    \begin{aligned}
    \hat{\mathcal{S}}_{i} (\hat{\theta}_{i}, x_0): =\minimize_{\hat{x}_{ifc},\hat{u}_{ifc},\check{\theta}_{i}} \ \ & \ \ f_{dyn}^{ijk} (\hat{x}_{ifc},\hat{u}_{ifc},\check{\theta}_{i})  \\
    \subt \ \ & \ \ g_{dyn}(\check{\theta}_{i}, \hat{x}_{ifc},\hat{u}_{ifc}) \leq 0 \\
    & \ \ \check{\theta}_{i} = \hat{\theta}_{i} \ \ : \hat{\lambda}_{i}(\hat{\theta}_{i},x_0)\\
    & \ \ x_{i11} = x_0.
    \end{aligned}
\end{equation}
The solution of this subproblem depends on the transition time and the initial condition of the system, thus  $\hat{\mathcal{S}}_i$ and $\hat{\lambda}$ depend on  $\hat{\theta}_i$ and $x_0$.

Given this structure of the problem and the value functions defined above, the mixed integer MPC problem, defined as $P(p,x_0)$, can be written as:
\begin{equation}
    \begin{aligned}
    \maximize_{w, \theta_{ijk}, \hat{\theta}_i} \ & \ \Phi_1 (w;p) - \sum_{ijk} Z_{ijk} \mathcal{S}_{ijk}(\theta_{ijk}) - \sum_{i} \hat{Z}_i \hat{S}_i(\hat{\theta},x_0)\\
    \subt \ & \ g_{sched}(w, \theta_{ijk}, \hat{\theta}_i; p) \leq 0 \ (\text{Eq.}~\ref{logic cons}, \ref{timing cons}, \ref{timing cons 2}, \ref{timing cons - tr time}, \ref{inv cons})
    \end{aligned}
\end{equation}
where $w$ are scheduling variables. This problem can not be solved directly since the value functions are not known explicitly. The value functions can be approximated iteratively with hyperplanes, called Benders cuts, as follows \cite{geoffrion1970elements, geoffrion1971duality, geoffrion1972generalized}:
\begin{equation}
    \begin{aligned}
    \mathcal{S}_{ijk}(\theta_{ijk}) \geq & \mathcal{S}_{ijk}(\bar{\theta}_{ijk}^l) - \lambda_{ijk}^l(\bar{\theta}_{ijk}^l) (\theta_{ijk} - \bar{\theta}_{ijk}^l) \ \ \forall 
    % \theta_{ijk} \geq \theta^{min}_{ij}, 
    l \in \mathcal{L}  \\
    \hat{\mathcal{S}}_i(\hat{\theta}_i) \geq & \hat{\mathcal{S}}_i (\bar{\hat{\theta}}_i^l,x_0) - \hat{\lambda}_i^l(\bar{\hat{\theta}}_i^l,x_0) (\hat{\theta}_i - \bar{\hat{\theta}}_i^l) \ \ \forall 
    % \hat{\theta}_{i} \geq \hat{\theta}^{min}_{i}, 
    l \in \mathcal{L},
    \end{aligned}
\end{equation}
where $l \in \mathcal{L}$ denotes the number of points used to approximate the value functions. The original problem can now be reformulated as 
\begin{equation}
    \begin{aligned}
    \maximize \ \ & \ \ \Phi_1 (w;p) - \sum_{ijk} Z_{ijk} \eta_{ijk} - \sum_{i} \hat{Z}_i \hat{\eta}_{i} \\
    \subt \ \ & \ \ g_{sched}(w, \theta_{ijk}, \hat{\theta}_i;p) \leq 0 \\
    & \ \     \eta_{ijk} \geq \mathcal{S}_{ijk}^l(\bar{\theta}_{ijk}^l) - \lambda_{ijk}^l (\theta_{ijk} - \bar{\theta}_{ijk}^l) \ \forall i,j,k,l  \\
    & \ \ \hat{\eta}_i \geq  \hat{\mathcal{S}}_i (\bar{\hat{\theta}}_i^l,x_0) - \hat{\lambda}_i^l(\bar{\hat{\theta}}_i^l,x_0) (\hat{\theta}_i - \bar{\hat{\theta}}_i^l) \ \forall i,l.
    \end{aligned}
\end{equation}

Given this problem reformulation, we can apply the proposed ML branch and check GBD algorithm, where the branching procedure explores different production sequences. Once a feasible production sequence is found, the transition cost and Lagrange multipliers are approximated via the Benders cut which are added to the master problem.

\begin{algorithm*}[ht]
\caption{Learning surrogate models for approximating value function and Lagrangean multipliers for transition from intermediate state to the steady state of the different products}
\label{alg: ml based bnc benders algorithm sampling mpc}
\KwData{Set of products $\mathcal{P}$, Steady-state concentration of products $x_{ss}$, Number of points $N_d$}
\KwResult{Surrogate models}
\For{$p \in \mathcal{P}$}{
epoch=0\;
$\mathcal{D}^{c}_{l} = \{ \ \}$, $\mathcal{D}^{c}_{u} = \{ \ \}$, $\mathcal{D}^{\lambda}_{l} = \{ \ \}$, $\mathcal{D}^{\lambda}_{u} = \{ \ \}$\;
\While{$epoch \leq N$}{
Get random value for $x_0 \sim U(0,1)$\;
Get $\hat{\theta}_{min}$ from $x_0$ to $x_{ss}(p)$\;
Select randomly $\hat{\theta} \sim U(\hat{\theta}_{min}, 2\hat{\theta}_{min}$)\;
Solve dynamic optimization problem from $x_0$ to $x_{ss}(p)$ with transition time $\hat{\theta}$ and obtain cost $c$ and Lagrangean multipliers $\lambda$\;
\eIf{$x_0 \leq x_{ss}(p)$}{
Append data point with features $x_0, \hat{\theta}$ and label $c$ to dataset $\mathcal{D}_l^{c}$: $\mathcal{D}_l^{c} = \mathcal{D}_l^{c} \cup ((\hat{\theta},x_0),c)$\;
Append data point with features $x_0, \hat{\theta}$ and label $\lambda$ to dataset $\mathcal{D}_l^{\lambda}$: $\mathcal{D}_l^{\lambda} = \mathcal{D}_l^{\lambda} \cup ((\hat{\theta},x_0),\lambda)$\;
}{Append data point with features $x_0, \hat{\theta}$ and label $c$ to dataset $\mathcal{D}_l^{c}$: $\mathcal{D}_u^{c} = \mathcal{D}_u^{c} \cup ((\hat{\theta},x_0),c)$\;
Append data point with features $x_0, \hat{\theta}$ and label $\lambda$ to dataset $\mathcal{D}_u^{\lambda}$: $\mathcal{D}_u^{\lambda} = \mathcal{D}_l^{\lambda} \cup ((\hat{\theta},x_0),\lambda)$\;}
epoch = epoch +1\;
}
Learn surrogate model $\tilde{\hat{\mathcal{S}}}_{p}^{u}$ using data set $\mathcal{D}_{u}^{c}$, model $\tilde{\hat{\mathcal{S}}}_{p}^{l}$ using data set $\mathcal{D}_{l}^{c}$, model $\tilde{\hat{\lambda}}_{p}^{u}$ using data set $\mathcal{D}_{u}^{\lambda}$, model $\tilde{\hat{\lambda}}_{p}^{l}$ using data set $\mathcal{D}_{l}^{\lambda}$\;
}
\end{algorithm*} 

\subsection{Feasibility of the mixed integer MPC problem} \label{feas and opt of mip mpc}

The following theorem provides conditions for determining the feasibility of the mixed integer MPC problem for this class of problems.
\begin{theorem}
    The mixed integer optimization problem in Eq.~\ref{intgr prob} is feasible if the following condition holds:
    \begin{equation} \label{feasibility condition}
        \sum_{i \in \mathcal{I}_p} d_i/r_i + \sum_{ (i,j) \in \mathcal{I}_{trans}} \theta_{ij}^{min} + \hat{\theta}_{i^*} \leq H-T_0,
    \end{equation}
    where $\mathcal{I}_{trans}$ contains the transitions that occur between products and $i^{*}$ is the product that is manufactured in the first slot such that the total transition time is minimized. 
\end{theorem}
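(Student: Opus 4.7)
The plan is to prove feasibility by explicit construction: I will exhibit a candidate assignment of all variables of~(\ref{intgr prob}) that satisfies every constraint whenever the inequality~(\ref{feasibility condition}) holds. This is the natural approach because the right-hand side of~(\ref{feasibility condition}) is precisely the total time budget $H-T_0$ appearing in the timing constraints~(\ref{timing cons}), while the left-hand side is a lower bound on the time consumed by any admissible schedule.

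First I would pick the production sequence referenced in the statement, namely the permutation $\pi$ of $\mathcal{I}_p$ whose first element $i^{*}$ and remaining ordering minimize $\hat{\theta}_{i^{*}}^{\min} + \sum_{(i,j)\in \mathcal{I}_{\text{trans}}(\pi)} \theta_{ij}^{\min}$; then set the scheduling binaries $W_{ik}, Z_{ijk}, \hat{Z}_{i}$ accordingly so that~(\ref{logic cons}) is immediately satisfied. Next I would set the transition times to their lower bounds, $\theta_{ijk}=\theta_{ij}^{\min}$ and $\hat{\theta}_{i^{*}}=\hat{\theta}_{i^{*}}^{\min}$, which makes~(\ref{timing cons - tr time}) hold with equality, and set the production times to $\Theta_{ik}= d_{i}/r_{i}$ for the slot $k$ in which product $i$ is scheduled (and $\Theta_{ik}=0$ otherwise), so that via~(\ref{inv cons}) we get $q_{ik}=d_{i}$ and the demand constraint $S_{iN_{s}}\geq d_{i}$ can be met by choosing $S_{iN_{s}}=d_{i}$ and $I_{ik}$ accordingly.

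The timing equality $T_{N_{s}}^{e}=H-T_{0}$ will generally not be met exactly by this minimal assignment; hypothesis~(\ref{feasibility condition}) only gives $\sum_{k}\bigl(\sum_{i}\Theta_{ik}+\theta_{k}^{t}\bigr) \leq H-T_{0}$. To absorb the slack $\Delta := (H-T_{0}) - \sum_{i} d_{i}/r_{i} - \sum_{(i,j)\in\mathcal{I}_{\text{trans}}}\theta_{ij}^{\min}-\hat{\theta}_{i^{*}}^{\min} \geq 0$, I would add $\Delta$ to any single production time $\Theta_{i^{*},1}$; this preserves~(\ref{logic cons}), keeps $\Theta_{ik}\leq W_{ik}H$ intact since $\Theta_{i^{*},1}\leq H$, and increases $q_{i^{*},1}$ by $r_{i^{*}}\Delta$, which is harmless because sales $S_{ik}$ are free nonnegative variables absorbing the extra production. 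The recursive definitions of $T_{k}^{s},T_{k}^{e}$ in~(\ref{timing cons}) then propagate consistently and yield $T_{N_{s}}^{e}=H-T_{0}$.

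Finally I would address dynamic feasibility of~(\ref{disc ode cons})--(\ref{disc ode cons int trans}): by the very definition of $\theta_{ij}^{\min}$ and $\hat{\theta}_{i}^{\min}$ as the minimum admissible transition times, there exist state/control trajectories $(x_{ijkfc},u_{ijkfc})$ and $(\hat{x}_{ifc},\hat{u}_{ifc})$ realizing the transitions at these horizons and satisfying the boundary conditions at the steady states $(x^{ss}_{i},u^{ss}_{i})$; plugging these into the collocation equations completes the feasible point. The main obstacle I anticipate is the combinatorial coupling between the choice of $i^{*}$ and the subsequent sequence that defines $\mathcal{I}_{\text{trans}}$, which must be handled consistently so that the quantity on the left of~(\ref{feasibility condition}) is indeed attained by the constructed schedule; once the sequence is fixed, the remainder is a routine verification that the explicit assignment satisfies each constraint block.
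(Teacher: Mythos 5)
Your proposal is correct and follows essentially the same route as the paper: both arguments rest on comparing the minimum time required (production at rate $d_i/r_i$ plus the transition times of the sequence minimizing $\hat{\theta}_{i^*}^{\min}+\sum_{(i,j)}\theta_{ij}^{\min}$, which the paper obtains via an explicit integer program over $W_{ik},Z_{ijk},\hat{Z}_i$) against the available horizon $H-T_0$. Your write-up is in fact more complete than the paper's, which only interprets the two sides of the inequality and omits the explicit feasible-point construction, the slack absorption, and the dynamic-feasibility argument that you supply.
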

\begin{proof}
    The right-hand side in the inequality of Eq.~\ref{feasibility condition} is the total available time and the left-hand side is the total time necessary for production and transitions. The first term of the left-hand side is the minimum production time for all the products such that the demand is satisfied. The second term captures the transitions with the minimum transition time. These transitions can be obtained by the solution of the following integer optimization problem:
    \begin{equation}
        \begin{aligned}
            \minimize_{W_{ik},Z_{ijk}, \hat{Z}_i} \ \ & \sum_{i \in \mathcal{I}_p} \hat{Z}_{i} \hat{\theta}_i^{min} + \sum_{i \in \mathcal{I}_p} \sum_{j \in \mathcal{I}_p} \sum_{k \in \mathcal{I}_s} Z_{ijk} \theta_{ij}^{min} \\
            \subt \ \ & \hat{Z}_i = W_{i1} \ \ \forall i \in \mathcal{I}_p\\
            & \sum_{k \in \mathcal{I}_s} W_{ik}  = 1 \ \  \forall \ i \in \mathcal{I}_p \\
        & \sum_{i \in \mathcal{I}_p} W_{ik}   = 1 \ \  \forall \ k \in \mathcal{I}_s\\
        & Z_{ijk}  \geq W_{ik} + W_{j,k+1} -1 \ \forall i \in \mathcal{I}_p,j \in \mathcal{I}_p ,k \in \mathcal{I}_s \setminus N_s,
        \end{aligned}
    \end{equation}
    and the set $I^{trans}$ and $i^*$ are equal to $I^{*} = \{ (i,j)| \{Z_{ijk}=1\}_{k=1}^{N_s}\}$ and $i^{*} = \{ i| \hat{Z}_{i}=1\}$. Therefore, if the summation of the production and transition time is less than length of the time horizon, then the optimization problem is feasible and the system can satisfy the demand.
\end{proof}

Based on Eq.~\ref{feasibility condition}, it follows that the optimization problem considered does not always have a feasible solution, i.e., the system can not always reject the disturbances and satisfy the demand. We posit however that the solution obtained by the proposed solution approach is feasible if the original optimization problem is feasible. Specifically, in the proposed approach, it is assumed that the subproblem is always feasible. Therefore, infeasibility can arise only from the master problem. However, if the master problem is infeasible, then the original problem will also be infeasible. Thus, the solution obtained by the proposed approach is feasible if the master problem is feasible. 

For the chemical production problem considered, since the subproblem considers transitions between products, the only possible source of infeasibility is the transition time, specifically, the case where the transition time is less than the minimum transition time between two products. However, since the transition times are bounded from below $\theta_{ijk} \geq \theta^{min}_{ij}$ and $\hat{\theta}_i \geq \hat{\theta}_{i}^{min}$ (Eq.~\ref{timing cons - tr time}) in the master problem, the subproblems are indeed always feasible. 

\begin{figure}[t]
    \centering
    \includegraphics[trim= 00 275 600 0,clip,scale=0.6]{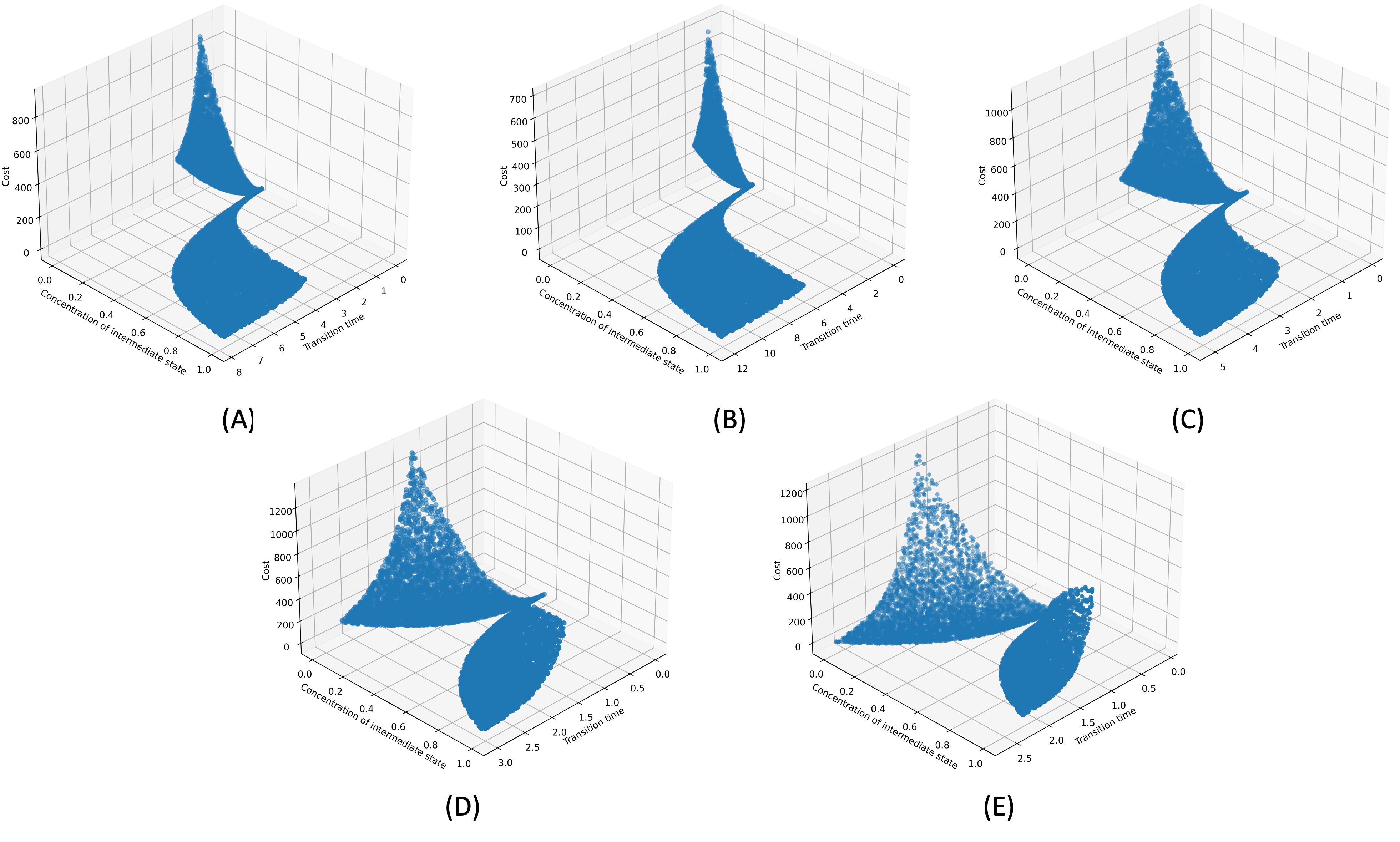}
    \caption{Transition cost from the intermediate state to the steady state of product 1 for different transition times. 
    % A, B, C, D, E refer to product 1,2,3,4,5 respectively. 
    The x-axis is the transition time, the y-axis the initial concentration in the reactor $(x_0)$, and the z-axis is the transition cost divided by 1000.}
    \label{fig: MIP MIC value functions}
\end{figure}

\begin{table}[t]
\centering
\caption{Parameters of the optimization problem for the case of five products, $C^{inv}=0.0026 \$/mol, \alpha_u=1$. $F_{ss}, c_{ss}$ are the steady state values of the inlet flowrate and concentration.}
\resizebox{\columnwidth}{!}{
\begin{tabular}{ccccccc}
\hline
Product & Demand $(mol)$ (change) & Price $(\$/mol)$ & Oper. cost & Prod. rate & $F_{ss}$  & $c_{ss}$  \\ \hline
1       & 600 (100)       & 200   & 13         & 150        & 200  & 0.24 \\
2       & 550(15)         & 160   & 22         & 80         & 100  & 0.20 \\
3       & 600 (30)        & 130   & 35         & 278        & 400  & 0.30 \\
4       & 1200 (20)       & 110   & 29         & 607        & 1000 & 0.39 \\
5       & 2000 (400)      & 140   & 25         & 1250       & 2500 & 0.50 \\ \hline
\end{tabular}}
\label{table: optimization problem parameters}
\end{table}

\begin{algorithm*}[t]
\caption{ML-based Branch and Check GBD for mixed-integer MPC for the CSTR}
\label{alg: ml based bnc benders algorithm MIP MPC case}
\KwData{Master problem, surrogate models $\tilde{\hat{\mathcal{S}}}_{p}^{u}$, $\tilde{\hat{\mathcal{S}}}_{p}^{l}$, $\tilde{\hat{\lambda}}_{p}^{u}$, $\tilde{\hat{\lambda}}_{p}^{l}$, $x_0$, $x_{ss}$}
\KwResult{Solution of the optimization problem}
Start Branch and Bound and obtain open nodes $R$\;
\While{$R \neq \emptyset$}{
Select a node $\rho$ from $R$ using node selection techniques\;
Solve continuous relaxation at $\rho$ and obtain $w^{\rho}$, $\theta_{ijk}^{\rho}$, $\hat{\theta}_{i}^{\rho}$\;
\eIf{Binary variables are integer}{
Approximate the value function and Lagrange multipliers for transitions between products that occur $ \mathcal{C}_{c} = \{ (i,j,k) |Z_{ijk}=1\}$, $\tilde{{\mathcal{S}}}(\theta_{ijk})$, $\tilde{{\lambda}}(\theta_{ijk})$\;
Add Benders cuts, $\eta \geq \tilde{{S}}(\theta_{ijk}^{\rho}) - \tilde{{\lambda}}(\theta_{ijk}^{\rho}) (\theta_{ijk} - \theta_{ijk}^{\rho}) \ \ \forall (i,j,k) \in \mathcal{C}_c$,
to all open nodes in $R$\;
Determine the product $p$ that is manufactured in the first slot\;
Obtain transition time $\hat{\theta}_p^{\rho}$\;
\eIf{$x_0 \leq x_{ss}(p)$}{
Approximate the value function and Lagrange multipliers using
$\tilde{\hat{S}}_{p}^{l}, \tilde{\hat{\lambda}}_{p}^{l}$ for the transition from $x_0$ to $p$ and add Benders cut $\hat{\eta}_p \geq \tilde{\hat{S}}_{p}^{l}(\hat{\theta}_{p}^{\rho}) - \tilde{\hat{\lambda}}_{p}^{l}(\hat{\theta}_{p}^{\rho}) (\hat{\theta}_{p} - \hat{\theta}_{p}^{\rho})$,
to all open nodes in $R$\;
}{
Approximate the value function and Lagrange multipliers using
$\tilde{\hat{S}}_{p}^{u}, \tilde{\hat{\lambda}}_{p}^{u}$ for the transition from $x_0$ to $p$ and add Benders cut $\hat{\eta}_p \geq \tilde{\hat{S}}_{p}^{u}(\hat{\theta}_{p}^{\rho}) - \tilde{\hat{\lambda}}_{p}^{u}(\hat{\theta}_{p}^{\rho}) (\hat{\theta}_{p} - \hat{\theta}_{p}^{\rho})$,  
to all open nodes in $R$\;
}
Add MIP-based cuts\;
Update existing nodes in $R$\;}{
Partition domain of $y$ variables\;
Add MIP-based cuts\;
Update existing nodes in $R$\;
}
}
\end{algorithm*}

\section{Numerical case studies} \label{case studies}
In this section we consider numerical case studies to evaluate the computational efficiency of the proposed approach. 

\subsection{Case study 1: 5 products}
We assume that the underlying manufacturing system is an isothermal continuously stirred tank reactor where an irreversible reaction $A \rightarrow 3B$ occurs. The dynamic behavior of the reactor is given by the following differential equation
\begin{equation}
    \frac{dc(t)}{dt} = \frac{F(t)}{V} (c_{feed}-c(t)) - k c(t)^3,
\end{equation}
where $c(t) \geq 0$ is the concentration of A in the reactor, $F(t) \geq 0$ is the inlet flowrate, and $V=5000 L,k = 2 L^2/(hr mol^2),c_{feed}=1 mol/L$ are the volume, reaction constant, and inlet concentration respectively. We assume that five products can be manufactured by changing the inlet flow rate. The monolithic problem has 26400 variables and 26712 constraints for the case where five products are manufactured. The parameters of the optimization problem, such as price, demand, operating conditions, etc., are presented in Table~\ref{table: optimization problem parameters}.

\subsubsection{Learning the surrogate models}
In this section, we present the generation of datasets for learning the surrogate models for the value function and Lagrange multipliers. 

\subsubsection{Transitions between products} \label{subs: trans betw prods}
The value function and Lagrange multipliers for the transitions between the products depend only on the transition time, i.e., $\mathcal{S}_{ijk}(\theta_{ijk})$, $\lambda_{ijk}(\theta_{ijk})$. We learn these functions using supervised learning. For the generation of the training data, for a transition from product $i$ to $j$, we discretize the transition time $\theta_{ijk}$ into $N_{data}=1000$ points between $[\theta_{ij}^{min}, 5 \theta_{ij}^{min}]$. For every value of the transition time we solve the subproblem using IPOPT \cite{wachter2006implementation}, we obtain $\mathcal{S}_{ijk}(\theta_{ijk}^{l})$ and $\lambda_{ijk}(\theta_{ijk}^{l})$ and form two datasets $\{\theta_{ijk}^{l}, \mathcal{S}(\theta_{ijk}^{l})\}_{l=1}^{N_{data}}$, $\{\theta_{ijk}^{l}, \lambda_{ijk}(\theta_{ijk}^{l})\}_{l=1}^{N_{data}}$. 

We use Neural Networks, Decision Trees, and Random Forests to approximate $\mathcal{S}_{ijk}, \lambda_{ijk}$ using Scikit-learn \cite{scikit-learn}. All the neural networks have the same architecture, 3 layers with 50 neurons, and \verb|relu| as an activation function. We use Adam \cite{kingma2014adam} with a learning rate equal to $10^{-4}$ (constant), $L_2$ regularization term equal to $10^{-4}$, the maximum number of iterations is $10^4$, and the solver stops if the objective does not improve for $10^3$ iterations. The Decision Trees and Random Forests are trained using the default values of hyperparameters. 

\subsubsection{Transitions between intermediate state and products}
The value functions for the subproblems that consider the transition from the intermediate state $x_0$ to the steady state of a product depend on the transition time and $x_0$. For every product, we generate a random value for $x_0$ and we compute the minimum transition time $\hat{\theta}_{min}$. Next, we solve multiple dynamic optimization problems for different values of the transition time $\hat{\theta} \in \{ \hat{\theta}_{min}, 2 \hat{\theta}_{min}\}$ and obtain the transition cost. The value functions for product 1 is presented in Fig.~\ref{fig: MIP MIC value functions}.

From this figure, we observe that there is a combination of initial concentration $x_0$ and transition time where the cost is very small. This corresponds to the case where $x_0$ is very close to the steady state concentration of a product $x_{ss}$. Thus the minimum transition time from $x_0$ to $x_{ss}$ and the associated cost are very small. The data presented in Fig.~\ref{fig: MIP MIC value functions} can be used to learn a surrogate model, such as a neural network, $\tilde{\hat{S}}(\hat{\theta},x_0)$ for the transition cost and $\tilde{\hat{\lambda}}(\hat{\theta},x_0)$ for the Lagrange multipliers. However, we observed that the accuracy of these surrogate models is low. To increase the accuracy of the prediction, for every product, we create two surrogate models, one that is used when $x_0>x_{ss}$ and one for $x_0<x_{ss}$. In this approach, every surrogate model approximates a part of the value functions presented in Fig.~\ref{fig: MIP MIC value functions}. 

The generation of the training dataset for the value function and Lagrange multipliers is presented in Algorithm~\ref{alg: ml based bnc benders algorithm sampling mpc}, where for each product first a random value for the concentration in the reactor is generated drawn from a uniform distribution between zero and one. Next, the minimum transition time $\hat{\theta}_{min}$ from the intermediate state to the steady state of product $i$ is computed, and a random value of the transition time between $\{ \hat{\theta}_{min}, 2\hat{\theta}_{min}\}$ is selected uniformly, and the cost and Lagrange multipliers are stored and used to learn the surrogate models. The procedure used to generate the data for learning the surrogate models and the implementation of the ML branch and check the GBD algorithm are presented in Algorithm~\ref{alg: ml based bnc benders algorithm sampling mpc} and Algorithm~\ref{alg: ml based bnc benders algorithm MIP MPC case}.

\begin{table*}[h]
\centering
\caption{Soluton time statistics for different surrogate models for the solution of mixed integer MPC problems for five products. M-GBD and hM-GBD refer to the standard multicut and accelerated hybrid multicut GBD respectively, and NN-GBD, DT-GBD, RF-GBD refer to the implementation of the proposed algorithm using Neural Networks, Decision Trees, and Random Forests as surrogate models.}
\begin{tabular}{cccccccc}
\hline
\multirow{2}{*}{Algorithm} & \multirow{2}{*}{\begin{tabular}[c]{@{}c@{}}Average\\ sol. time (sec.)\end{tabular}} & \multirow{2}{*}{\begin{tabular}[c]{@{}c@{}}Standard \\ deviation\end{tabular}} & \multirow{2}{*}{\begin{tabular}[c]{@{}c@{}}Average\\ percentage error\end{tabular}} & \multicolumn{2}{c}{Percentage reduction} & \multicolumn{2}{c}{Average fold reduction} \\ \cline{5-8}  &  &    &  & M-GBD  & hM-GBD  & M-GBD   & hM-GBD  \\ \hline
M-GBD    & 27.31  & 5.19 & - & - & - & - & -\\
hM-GBD   & 19.01  & 1.96 & - & -   & - & -& -\\ \hline
NN-GBD& 0.83& 0.29& 0.23& 96.98& 95.66& 35& 25\\
DT-GBD& \textbf{0.55}& \textbf{0.15}& \textbf{0.21}& \textbf{97.97}& \textbf{97.09}& \textbf{50}& \textbf{36}\\
RF-GBD& 4.00& 1.33& 0.21& 85.47& 78.88& 7.3& 5\\ \hline
\end{tabular}
\label{table:ML MIP MPC sol time state}
\end{table*}

\subsubsection{Numerical results}
We generate 20 feasible random disturbances that were not used for training to test the efficiency of the proposed approach. The proposed approach is implemented in Pyomo \cite{hart2017pyomo} using callbacks in Gurobi \cite{gurobi} with \verb|LazyCuts|. We compare the efficiency of the proposed approach with two algorithms from the literature. For both algorithms (from the literature), the decomposition of the problem is the same as the one in this paper. The first algorithm is a standard multicut GBD algorithm where in each iteration, the cuts are added only for the transitions that occur. The second algorithm, proposed in \cite{mitrai2022multicut}, is an accelerated hybrid multicut GBD algorithm, where the cuts are computed for a given transition $(i,j,k)$ but are added for all $(i,j,k')$ where $k' \in \mathcal{I}_s$. We refer to the standard multicut algorithm as M-GBD and the hybrid multicut as hM-GBD. The application of the hybrid multicut algorithms can be found in \cite{mitrai2023focapo}. The solution time statistics of the different algorithms are presented in Table~\ref{table:ML MIP MPC sol time state}.

The average solution time of standard multicut GBD and hybrid multicut GBD is $27.31$ and $19.01$ seconds, respectively. The solution time with the proposed approach is $0.83$, $0.55$, and $4$ seconds when the Neural Networks, Decision Trees, and Random Forests are used as surrogate models for the transition cost and Lagrangean multipliers. These results show that the proposed approach can lead to a significant reduction in solution time; the average reduction in solution time is $97 \% \ (50 \times)$ and $95 \% \ (36\times)$ when the neural networks and decision trees are used as surrogate models. 

Next, we compare the standard deviation of the different solution approaches. From Table~\ref{table:ML MIP MPC sol time state} we observe that the standard deviation of the solution time with the proposed approach is lower than the multicut and hybrid multicut algorithms. Specifically, the standard deviation of the standard multicut algorithm is $5.19$ seconds and for the hybrid multicut GBD algorithm is $1.96$ seconds, whereas the standard deviation of the proposed approach is $0.29$, $0.15$, and $1.33$ seconds when the Neural Networks, Decision Trees and Random Forests are used as surrogate models. These results show that the proposed approach is more robust than both the standard and accelerated multicut GBD algorithms.

Also, we analyze the error in the optimal solution that is obtained using the multicut and hybrid multicut GBD algorithms from \cite{mitrai2022multicut} and the proposed approach. The error is computed as follows
\begin{equation}
    Error_i = 100 \times \frac{|f^*_{GBD} - f^*_{i-GBD}|}{f^*_{GBD}} \ \forall i=\{NN,DT,RF\}.
\end{equation}
In the above equation, $f^*_{GBD}$ is the optimal solution obtained either via the multicut or hybrid multicut GBD since both these algorithms converge to the same solution (see \cite{mitrai2022multicut}). The average percentage error is presented in Table~\ref{table:ML MIP MPC sol time state}. We observe that all the surrogate models have similar average errors, and the decision tree shows the minimum average error of $0.21 \%$. Overall, these results show that the proposed approach can find high-quality feasible solutions with small errors in significantly reduced solution time, characteristics that are necessary for the online solution of mixed integer economic MPC problems encountered in industrial applications.

\subsection{Case study 2: Effect of number of products on computational performance}
In the second case study, we analyze the effect of the number of products on the computational efficiency of the proposed approach. We solve the problem for five up to eight products. The surrogate models are learned using the steps from subsection~\ref{subs: trans betw prods} and Algorithm~\ref{alg: ml based bnc benders algorithm sampling mpc}. The values of the parameter of the optimization problems are presented in the Supplementary Material. For a given number of products, we generate 20 random feasible disturbances, and the solution time statistics for different numbers of products are presented in Table~\ref{table: solt time stat for diff num of prods} and the average solution time is presented in Fig.~\ref{fig:MIP MPC sol time multiple prods}.
\begin{figure}[h]
    \centering
    \includegraphics[scale=0.6]{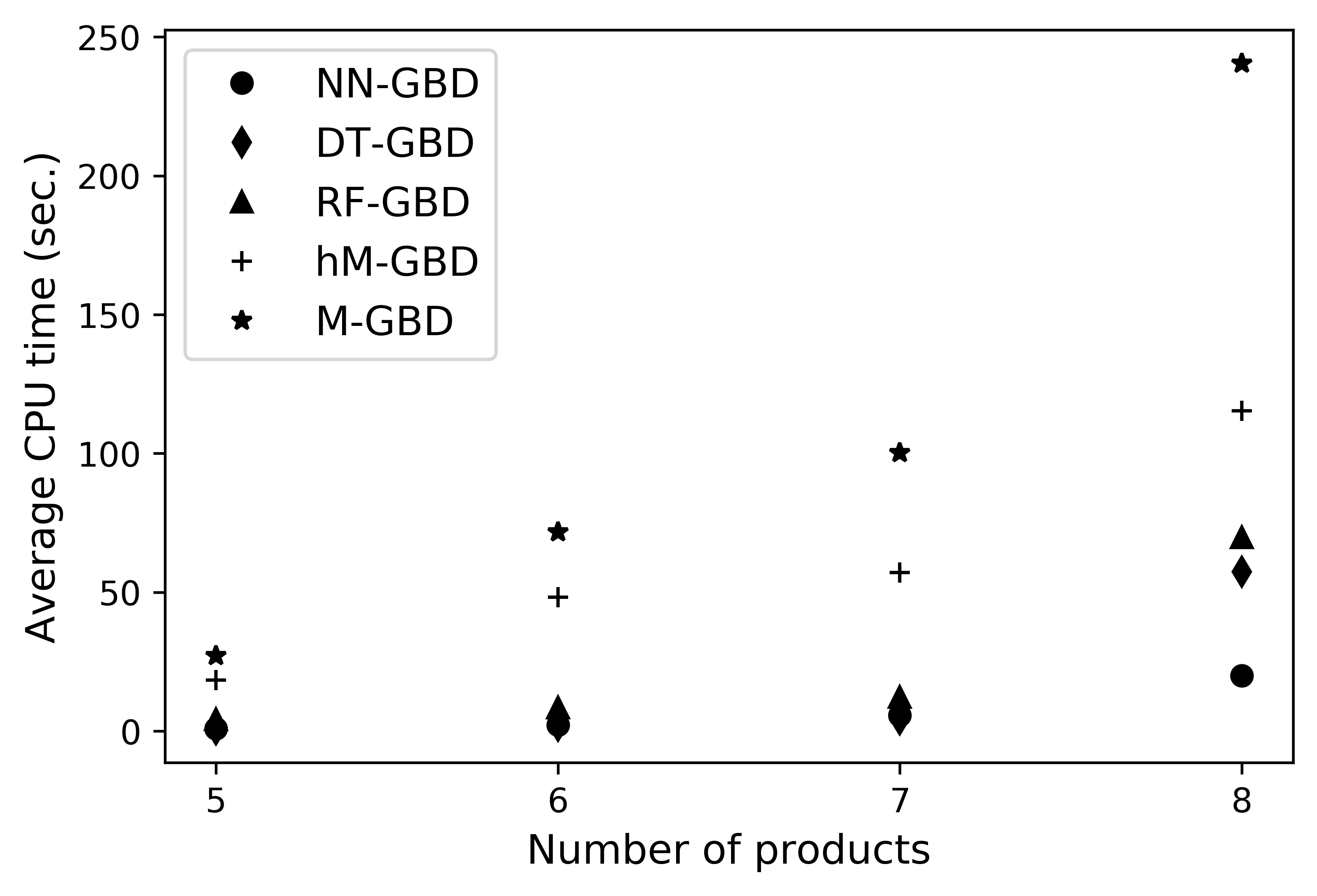}
    \caption{Solution time for 20 randomly generated instances using the multicut GBD (M-GBD), hybrid multicut GBD (hM-GBG) from \cite{mitrai2022multicut} and the proposed method using different surrogate models, Neural Network (NN-GBD), Decision Tree (DT-GBD), and Random Forest (RF-GBD) for the different number of products.}
    \label{fig:MIP MPC sol time multiple prods}
\end{figure}

From the results, we observe that an increase in the number of products leads to an increase in the solution time for all the algorithms. Specifically, for the multicut GBD algorithm, the solution time increases from $27.31$ seconds for five products to $71.70$ seconds for six, $100$ seconds for seven, and $240$ seconds for eight products. Similarly, for the hybrid multicut algorithm, the average solution time increases from $19$ seconds for five products to $48.21$ seconds for six, $57.10$ seconds for seven, and $115.27$ seconds for eight products. Regarding the proposed approach, although the average solution time increases, the average solution time is significantly lower than the solution time of the multicut and hybrid multicut GBD algorithms. Specifically, for the Neural Network surrogate models, the solution time increases from $0.83$ seconds for 5 products to $2.14$ seconds for 6 seconds, $5.72$ for 7 seconds, and $19.85$ for 8 products. A similar increase is observed for the Decision Tree and Random Forest surrogate models. Comparing the effect of the different surrogate models, we observe that for five, six, and seven products, the Decision Trees lead to lower CPU time and error. For eight products, the neural networks have the minimum CPU time, and the Decision Trees have the lowest average error. From these results, we observe that the proposed approach is significantly faster than standard versions of GBD for all the number of products considered.

Furthermore, from the numerical results, we observe that the standard deviation of the solution time for the proposed approach is lower than the standard deviation of the multicut and hybrid muticut algorithms. The variance in the solution time is caused by the change in the complexity of the optimization problem that is solved once a disturbance affects the systems. Since the disturbance occurs at a random time point $T_0$, the number of products that must be manufactured can vary based on the demand and the amount produced for every product up to time $T_0$. However, the standard deviation of the proposed approach is lower than the standard deviation of the multicut and hybrid multicut GBD algorithms as reported in Table~\ref{table: solt time stat for diff num of prods}. Finally, we analyze the error in the optimal solution obtained with the proposed approach. From the results in Table~\ref{table: solt time stat for diff num of prods} we observe that the average error is in the order of $1 \%$ for the Neural Networks and $10^{-1}$ for the Decision Tree and the Random Forest. 

\begin{table}[h]
\centering
\caption{Solution time statistics for the multicut (M-GBD) and hybrid multicut (hM-GBD) GBD algorithms from \cite{mitrai2022multicut} and the proposed method using different surrogate models, Neural Network (NN-GBD), Decision Tree (DT-GBD), and Random Forest (RF-GBD), for different number of products.}
  \resizebox{\columnwidth}{!}{  
\begin{tabular}{cccccc}
\hline
\multirow{2}{*}{\begin{tabular}[c]{@{}c@{}}Solution time\\ statistics\end{tabular}}                             & \multirow{2}{*}{Algorithm} & \multicolumn{4}{c}{Number of products} \\ \cline{3-6} 
&                            & 5       & 6       & 7        & 8       \\ \hline
\multirow{5}{*}{\begin{tabular}[c]{@{}c@{}}Average \\ solution\\ time\end{tabular}}                             & M-GBD                      & 27.31   & 71.70   & 100.18   & 240.47  \\
& hM-GBD  & 19.01   & 48.21   & 57.10    & 115.27  \\
& NN-GBD  & 0.83    & 2.14    & 5.72     & \textbf{19.85}   \\
& DT-GBD  & \textbf{0.55}    & \textbf{1.84}    & \textbf{4.26}     & 57.30   \\
& RF-GBD    & 4.00    & 8.42    & 23.49    & 69.70   \\ \hline
\multirow{5}{*}{\begin{tabular}[c]{@{}c@{}}Standard \\ deviation\end{tabular}}                                  
& M-GBD  & 5.19    & 39.51   & 32.08    & 111.29  \\
& hM-GBD  & 1.96    & 25.95   & 19.55    & 71.09   \\
& NN-GBD & 0.29    & 1.07    & 3.07     & 9.11    \\
& DT-GBD  & 0.13    & 1.21    & 2.89     & 35.37   \\
& RF-GBD  & 1.33    & 3.18    & 4.28     & 39.93   \\ \hline
\multirow{3}{*}{\begin{tabular}[c]{@{}c@{}}Average\\ error\end{tabular}}                                        
& NN-GBD   & 0.23    & 1.39    & 1.15     & 1.52    \\
& DT-GBD  & \textbf{0.21}    & \textbf{0.39}   & \textbf{0.19}     & \textbf{0.18}    \\
& RF-GBD  & 0.21    & 0.37    & 0.20     & 0.18    \\ \hline
\multirow{3}{*}{\begin{tabular}[c]{@{}c@{}}Average sol. time reduction\\ compared to M-GBD\end{tabular}}  
& NN-GBD     & \textbf{97}      & 96      & 94       & \textbf{91}      \\
& DT-GBD  & \textbf{97}      & \textbf{97}      & \textbf{95}       & 75      \\
& RF-GBD   & 85      & 80      & 87       & 69      \\ \hline
\multirow{3}{*}{\begin{tabular}[c]{@{}c@{}}Average sol. time reduction \\compared to hM-GBD\end{tabular}} 
& NN-GBD   & 95      & \textbf{95}      & 89       & \textbf{80}      \\
& DT-GBD  & \textbf{97} & \textbf{95}      & \textbf{92}       & 46      \\
& RF-GBD  & 78      & 86      & 77       & 32      \\ \hline
\multirow{3}{*}{\begin{tabular}[c]{@{}c@{}}Average fold reduction\\ compared to M-GBD\end{tabular}}         & NN-GBD                     & 35      & 37      & 23       & \textbf{13}      \\
& DT-GBD   & \textbf{50}      & \textbf{47}      & \textbf{31}       & 8       \\
& RF-GBD & 7       & 8       & 8        & 4       \\ \hline
\multirow{3}{*}{\begin{tabular}[c]{@{}c@{}}Average fold reduction\\ compared to hM-GBD\end{tabular}}       
& NN-GBD  & 25      & 26      & 13       & \textbf{6}       \\
& DT-GBD  & \textbf{36}& \textbf{34}      & \textbf{18}       & 4       \\
& RF-GBD  & 5       & 6       & 5        & 2       \\ \hline
\end{tabular}}
\label{table: solt time stat for diff num of prods}
\end{table}

\section{Conclusions}
Mixed integer MPC problems arise in a wide range of applications. The implementation of such a control strategy depends on the efficient solution of a mixed integer optimization problem. In this paper, we combine decomposition-based optimization algorithms with machine learning to accelerate the online solution of mixed integer MPC problems. Specifically, we propose a machine learning-based branch and check GBD algorithm, where the information required for the construction of Benders cuts is approximated via ML-based surrogate models, which are learned offline. Application to mixed integer MPC of chemical processes shows that the proposed approach can lead up to $97 \%$ reduction in solution time while incurring small error (in the order of $1 \%$) in the value of the objective function. Finally, we point out a few extensions of the proposed approach in the remarks below. 

\begin{remark}
    \normalfont As discussed throughout the paper, the proposed approach is based on approximating the value function of the subproblem and the Lagrange multipliers. However, obtaining a large dataset, even offline, can be computationally challenging. Consider, for example, the case where the subproblem is a large-scale optimization problem or a nonlinear optimization problem that requires an initial guess for the optimal solution. In such cases, the data generation process can be nontrivial. For example, one can use efficient sampling techniques based on active learning, semi-supervised techniques to learn from a limited set of data, or transfer learning. 
\end{remark}
\begin{remark}
    \normalfont In the case studies considered in this paper, the original problem was decomposed into a master problem and a set of subproblems, and the same hyperparameters were used for all the surrogate models and transitions. However, one can potentially use different hyperparameters and surrogate models for the different subproblems, which can be obtained via standard hyperparameter tuning techniques. 
\end{remark}
\begin{remark}
   \normalfont As discussed in Section~\ref{feas and opt of mip mpc}, the solution returned by the proposed approach is feasible if the original problem is feasible and the subproblem is feasible for all the values of the complicating variables. However, for general mixed integer MPC problems, the subproblem can be infeasible for certain values of the complicating variables. In such cases, the proposed approach can still be applied but must be combined with an infeasibility detection and a feasibility restoration step. The infeasibility detection step will predict whether the subproblem is feasible for given values of the complicating variables, and based on this prediction, Benders feasibility or optimality cuts will be approximated and added to the master problem. The feasibility restoration phase is necessary to guarantee that the solution that is obtained is feasible. This restoration step is common in many hybrid optimization algorithms that combine machine learning and mathematical optimization \cite{chen2023end,chatzos2021spatial}.
\end{remark}

\section{Acknowledgment}
Financial support from NSF-CBET is gratefully acknowledged.

\bibliographystyle{elsarticle-num}
\bibliography{sample}

\end{document}